\documentclass[11pt]{article}
\usepackage{amssymb,amsmath,amsthm,amsfonts}
\usepackage{graphicx}
\usepackage{epsfig}

\textwidth=16.5cm \textheight=23.5cm \headheight=0cm
\topmargin=1cm

\oddsidemargin 0cm \headsep=-1.0cm \raggedbottom

\usepackage{amssymb,amsmath,amsthm,amsfonts,mathrsfs, bbm,dsfont}

\theoremstyle{plain}

\theoremstyle{definition}
\newtheorem{definition}{Definition}[section]
\newtheorem{example}{Example}[section]
\newtheorem{remark}{Remark}[section]

\newtheorem{theorem}{Theorem}[section]
\newtheorem{lemma}{Lemma}[section]
\newtheorem{proposition}{Proposition}[section]

\setcounter{equation}{0}

\numberwithin{equation}{section}

\linespread{1.0}

\begin{document}
\openup 0.8\jot
\title{\Large\bf The construction of observable algebra in field algebra of $G$-spin models determined by a normal subgroup \thanks{This work is supported
by National Science Foundation of China (10971011,11371222)} }
\author{ Xin Qiaoling, Jiang Lining \thanks{E-mail address: jianglining@bit.edu.cn}}
\date{}
\maketitle\begin{center}
\begin{minipage}{16cm}
{\small \it School of Mathematics and Statistics, Beijing Institute
of Technology, Beijing 100081, P. R. China}
\end{minipage}
\end{center}
\vspace{0.05cm}
\begin{center}
\begin{minipage}{16cm}
{\small {\bf Abstract}: Let $G$ be a finite group and $H$ a normal subgroup. Starting from $G$-spin models, in which a non-Abelian field ${\mathcal{F}}_H$ w.r.t. $H$ carries an action of the Hopf $C^*$-algebra $D(H;G)$, a subalgebra of the quantum double $D(G)$, the concrete construction of the observable algebra ${\mathcal{A}}_{(H,G)}$ is given, as $D(H;G)$-invariant subspace. Furthermore, using the iterated twisted tensor product, one can prove that the observable algebra ${\mathcal{A}}_{(H,G)}=\cdots\rtimes H\rtimes\widehat{G}\rtimes H\rtimes\widehat{G}\rtimes H\rtimes\cdots$, where $\widehat{G}$ denotes the algebra of complex functions on $G$, and $H$ the group algebra.
}
\endabstract
\end{minipage}\vspace{0.10cm}
\begin{minipage}{16cm}
{\bf  Keywords}: twisted tensor product, field algebra, observable algebra, $C^*$-inductive limit\\
Mathematics Subject Classification (2010): 46N50, 46L40, 16T05
\end{minipage}
\end{center}
\begin{center} \vspace{0.01cm}
\end{center}

\section{ Introduction}
 Let $G$ be a finite group with a unit $e$. The $G$-valued spin configuration on the 2-dimensional square lattices is the map $\sigma\colon{\Bbb Z}^2\rightarrow G$ with Euclidean action functional
 \begin{eqnarray*}
    \begin{array}{c}
 S(\sigma)=\sum \limits_{(x,y)} f(\sigma_x^{-1}\sigma_y),
 \end{array}
\end{eqnarray*}
where the summation runs over the nearest neighbor pairs in ${\Bbb Z}^2$ and $f\colon G\rightarrow {\Bbb R}$ is a function of the positive type. This kind of classical statistical systems or the corresponding quantum field theories are called $G$-spin models, see \cite{S.Dop,V.F.R.J, K.Szl}. Such models provide the simplest examples of lattice field theories exhibiting quantum symmetry. Generally, $G$-spin models with an Abelian group $G$ have a symmetry structure of $G\times \widetilde{G}$, where $\widetilde{G}$ is the Pontryagin dual of $G$. If $G$ is non-Abelian, the Pontryagin dual loses its meaning, and one usually considers the quantum double $D(G)$ of $G$, {\cite{K.A.Da,G.Mas}}. Here $D(G)$ is defined as the crossed product of $C(G)$, the algebra of complex functions on $G$, and group algebra ${\Bbb C}G$ with respect to the adjoint action of the latter on the former. Then $D(G)$ becomes a Hopf *-algebra of finite dimension \cite{P.Ban,C.Kas,F.Nil}. As in the traditional quantum field theory, one can define a field algebra ${\mathcal{F}}$ associated with this models, which is a $C^*$-algebra generated by
$\{\delta_g(x), \rho_h(l)\colon g\in G, h\in G, x\in {\Bbb Z}, l\in {\Bbb Z}+\frac{1}{2}\}$ subject to some relations \cite{K.Szl}. There is a natural action of $D(G)$ on ${\mathcal{F}}$ so that ${\mathcal{F}}$ becomes a $D(G)$-module algebra. Under this action on ${\mathcal{F}}$, the observable algebra ${\mathcal{A}}_G$ is obtained. In \cite{F.Nil}, Nill and Szlach$\mathrm{\acute{a}}$nyi pointed out ${\mathcal{A}}_G=\cdots \rtimes G\rtimes \widehat{G}\rtimes G\rtimes \widehat{G}\rtimes G\rtimes \cdots$, where the crossed product is taken with respect to the natural left action of the latter factor on the former one. In this paper, we extend the result to a general situation.

Assume that $G$ is a finite group and $H$ is a normal subgroup of $G$. In our previous paper \cite{Qiao},
we define a Hopf $C^*$-algebra $D(H;G)$, which is only a subalgebra of $D(G)$. Subsequently, we construct an algebra $\mathcal{F}_H$ in
the field algebra $\mathcal{F}$ of $G$-spin models, which is a $C^*$-algebra generated by $\{\delta_g(x), \rho_h(l)\colon g\in G, h\in H; x\in {\Bbb Z}, l\in {\Bbb Z}+\frac{1}{2}\}$, called the field algebra of $G$-spin models determined by $H$. There also exists a
natural action of $D(H;G)$ on $\mathcal{F}_H$, such that $\mathcal{F}_H$ is a $D(H;G)$-module algebra whereas F is not. Then
the observable algebra ${\mathcal{A}}_{(H,G)}$, which is the set of fixed points of $\mathcal{F}_H$ under the action of $D(H;G)$ is
obtained. In Section 2, we point out the concrete construction of the observable algebra ${\mathcal{A}}_{(H,G)}$.

In Section 3, we identify $H$ with the group algebra $\Bbb{C}H$, and $\widehat{G}$ the set of complex functions on $G$.
We firstly construct iterated twisted tensor product algebras of three factors, i.e. $H\bigotimes_{R_{0,1}}\widehat{G}\bigotimes_{R_{1,2}} H$ and $\widehat{G}\bigotimes_{R_{1,2}} H\bigotimes_{R_{0,1}}\widehat{G}$ by means of twisting maps $R_{0,1}$ and $R_{1,2}$, and then by induction build an iterated twisted product of any number of factors
\begin{eqnarray*}
    \begin{array}{c}
A_{n,m}=A_n\bigotimes_{R_{n,n+1}}A_{n+1}\bigotimes_{R_{n+1,n+2}}\cdots A_{m-1}
\bigotimes_{R_{m-1,m}}A_m,
\end{array}
\end{eqnarray*}
where $n,m\in \Bbb{Z}$ with $n<m$ and
$A_i=\left\{\begin{array}{cc}
                          H, & {\mathrm{if}} \ i \ {\mathrm{is}} \ {\mathrm{even}}\\
                          \widehat{G}, & {\mathrm{if}} \ i \ {\mathrm{is}} \ {\mathrm{odd}}
                        \end{array}\right.$,
which is a $C^*$-algebra of finite dimension. Let $n<n'$ and $m'<m$, one can show that $A_{n',m'}\subseteq A_{n,m}$, and then by the $C^*$-inductive limit of $A_{n,m}$,
 we obtain
 \begin{eqnarray*}
    \begin{array}{c}
    {\mathcal{A}}={(C^*)}\lim\limits_{n<m} A_{n,m},
\end{array}
\end{eqnarray*}
that is,
\begin{eqnarray*}
    \begin{array}{c}
    {\mathcal{A}}=\cdots H\bigotimes_{R_{0,1}} {\widehat{G}}\bigotimes_{R_{1,2}} H\bigotimes_{R_{0,1}}{\widehat{G}}\bigotimes_{R_{1,2}} H\bigotimes_{R_{0,1}}{\widehat{G}}\cdots.
\end{array}
\end{eqnarray*}
 Finally, we prove that there is a $C^*$-isomorphism between $C^*$-algebras ${\mathcal{A}}$ and ${\mathcal{A}}_{(H,G)}$.

All the algebras in this paper will be unital associative algebras over the complex field ${\Bbb C}$. The unadorned tensor product $\otimes$ will stand for the usual tensor product over ${\Bbb C}$.
 For general results on Hopf algebras we refer to the books of Abe \cite{E.Abe} and Sweedler\cite{M.E.Sw}. We shall adopt their notations, such as $S$, $\bigtriangleup$, $\varepsilon$ for the antipode, the comultiplication and the counit, respectively. Also we shall use Sweedler-type notation
\begin{eqnarray*}
    \begin{array}{c}
    \bigtriangleup(a)=\sum\limits_{(a)}a_{(1)}\otimes a_{(2)}.
     \end{array}
\end{eqnarray*}

\section{The structure of the observable algebra with respect to $H$}
In this section, suppose that $G$ is a finite group with a normal subgroup $H$,
we will give the algebraic generators for the observable algebra with respect to a normal group $H$. First, we recall some definitions in $G$-spin models with respect to $H$ which will be needed in the sequel \cite{Qiao}.

\begin{definition}$^{\cite{Qiao}}$
$D(H;G)$ is the crossed product of $C(H)$ and group algebra ${\Bbb C}G$, where $C(H)$ denotes the set of complex functions on $H$, with respect to the adjoint action of the latter on the former.
\end{definition}

Using the linear basis elements $(h,g)$ of $D(H;G)$, the structure maps are given by
\begin{eqnarray*}
    \begin{array}{rcll}
(h_1,g_1)(h_2,g_2)&=&\delta_{h_1g_1,g_1h_2}(h_1,g_1g_2),& (\mathrm{multiplication})\\[5pt]
\bigtriangleup(h,g)&=&\sum \limits_{t\in H}(t,g)\otimes (t^{-1}h,g),& (\mathrm{coproduct})\\[5pt]
\varepsilon(h,g)&=&\delta_{h,e},& (\mathrm{counit})\\[5pt]
S(h,g)&=&(g^{-1}h^{-1}g,g^{-1}),& (\mathrm{antipode})\\[5pt]
 (h,g)^*&=&(g^{-1}hg,g^{-1}),& (\mbox{*-operation}),
     \end{array}
\end{eqnarray*}
where $\delta_{g,h}=\left\{\begin{array}{cc}
                          1, & {\mathrm{if}} \ \  g=h \\
                          0, & {\mathrm{if}} \ \ g\neq h
                        \end{array}\right.$.
In \cite{Qiao}, we have shown
$D(H;G)$ is a Hopf $C^*$-algebra, with a unique element $z_{_{(H,G)}}=\frac{1}{|G|}\sum\limits_{g\in G}(e,g)$, called an integral, satisfying for any $a\in D(H;G)$,
$$az_{_{(H,G)}}=z_{_{(H,G)}}a=\varepsilon(a)z_{_{(H,G)}}.$$

As in the traditional case, one can define the local quantum field algebra as follows.

\begin{definition}$^{\cite{Qiao}}$
 The local field ${\mathcal{F}}_{H,\mathrm{loc}}$ determined by $H$ is an associative algebra with a unit $I$ generated by
 $\{\delta_g(x), \rho_h(l)\colon g\in G, h\in H; x\in {\Bbb Z}, l\in {\Bbb Z}+\frac{1}{2}\}$ subject to
\begin{eqnarray*}
    \begin{array}{rcl}
\sum\limits_{g\in G}\delta_g(x)&=&I=\rho_e(l),\\
\delta_{g_1}(x)\delta_{g_2}(x)&=&\delta_{g_1,g_2}\delta_{g_1}(x),\\
 \rho_{h_1}(l)\rho_{h_2}(l)&=&\rho_{h_1h_2}(l),\\
\delta_{g_1}(x)\delta_{g_2}(x')&=&\delta_{g_2}(x')\delta_{g_1}(x),\\
 \rho_h(l)\delta_g(x)&=& \left\{\begin{array}{cc}
                          \delta_{hg}(x)\rho_h(l), & l<x, \\
                          \delta_g(x)\rho_h(l), & l>x,
                        \end{array}\right.\\
 \rho_{h_1}(l)\rho_{h_2}(l')&=& \left\{\begin{array}{cc}
                          \rho_{h_2}(l')\rho_{{h_2}^{-1}h_1{h_2}}(l), & l>l', \\
                          \rho_{{h_1}{h_2}{h_1}^{-1}}(l')\rho_{h_1}(l), & l<l'
                        \end{array}\right.

    \end{array}
\end{eqnarray*}
for $x, x' \in {\Bbb Z};\ l, l'\in {\Bbb Z}+\frac{1}{2} \ {\mathrm{and}} \ h_1, h_2\in H, g_1,g_2\in G$.
\end{definition}

The *-operation is defined on the generators as $\delta_g^\ast(x)=\delta_g(x), \ \rho_h^\ast(l)=\rho_{h^{-1}}(l)$ and is extended to a involution on ${\mathcal{F}}_{H,\mathrm{loc}}$. In this way, ${\mathcal{F}}_{H,\mathrm{loc}}$ becomes a unital *-algebra. Using the $C^*$-inductive limit, ${\mathcal{F}}_{H,\mathrm{loc}}$ can be extended to a $C^*$-algebra ${\mathcal{F}}_H$, called the field algebra of $G$-spin models determined by a normal subgroup $H$.
There is an action $\gamma$ of $D(H;G)$ on ${\mathcal{F}}_H$ in the following. For $x\in {\Bbb Z}; \ l\in {\Bbb Z}+\frac{1}{2} \ {\mathrm{and}} \ h\in H, g\in G$, set
\begin{eqnarray*}
    \begin{array}{rcl}
(h,g)\delta_f(x)&=&\delta_{h,e}\delta_{gf}(x), \ \ \forall f\in G,\\
(h,g)\rho_t(l)&=&\delta_{h,gtg^{-1}}\rho_h(l), \ \ \forall t\in H.
 \end{array}
\end{eqnarray*}
One can check that ${\mathcal{F}}_H$ is a $D(H;G)$-module algebra \cite{Qiao}.

Set $${\mathcal{A}}_{(H,G)}=\{F\in {\mathcal{F}}_H\colon a(F)=\varepsilon(a)(F), \ \forall a\in D(H;G)\}.$$ We call it an observable algebra related to $H$ in the field algebra ${\mathcal{F}}$ of $G$-spin models. Furthermore, one can show that ${\mathcal{A}}_{(H,G)}$ is a nonzero $C^*$-subalgebra of ${\mathcal{F}}_H$, and $${\mathcal{A}}_{(H,G)}=\{F\in {\mathcal{F}}_H\colon z_{_{(H,G)}}(F)=F\}\equiv z_{_{(H,G)}}({\mathcal{F}}_H).$$

Now, we will discuss the concrete construction of ${\mathcal{A}}_{(H,G)}$. In order to do this, for $g\in G$, $x\in\Bbb{Z}$, and $l\in\Bbb{Z}+\frac{1}{2}$, put
\begin{eqnarray*}
    \begin{array}{rcl}
v_g(x)&=&\sum\limits_{h\in G}\varrho_{hg^{-1}h^{-1}}(x-\frac{1}{2})\delta_h(x)
\varrho_{hgh^{-1}}(x+\frac{1}{2}),\\[5pt]
w_g(l)&=&\sum\limits_{h\in G}\delta_h(l-\frac{1}{2})\delta_{hg}(l+\frac{1}{2}).
\end{array}
\end{eqnarray*}

\begin{theorem}
The observable algebra ${\mathcal{A}}_{(H,G)}$ related to $H$ is a unital $C^*$-subalgebra of ${\mathcal{F}}_H$ generated by
$$\left\{v_h(x), w_g(l)\colon h\in H, g\in G, x\in\Bbb{Z}, l\in\Bbb{Z}+\frac{1}{2}\right\}.$$
\end{theorem}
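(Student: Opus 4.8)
The plan is to prove the two inclusions separately: that each $v_h(x)$ and $w_g(l)$ belongs to $\mathcal{A}_{(H,G)}$, and that they generate all of it. For the first inclusion I would verify directly that every basis element $(k,s)\in D(H;G)$ acts on the generators as $(k,s)(F)=\varepsilon(k,s)F=\delta_{k,e}F$. For $w_g(l)$ this is immediate: applying $\bigtriangleup(k,s)=\sum_{t\in H}(t,s)\otimes(t^{-1}k,s)$ to $\delta_h(l-\tfrac12)\delta_{hg}(l+\tfrac12)$ and using $(r,s)\delta_f(x)=\delta_{r,e}\delta_{sf}(x)$ forces $t=e$ and $k=e$, after which reindexing $h\mapsto sh$ returns $w_g(l)$. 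For $v_h(x)$ the same computation with the iterated coproduct $\bigtriangleup^{(2)}(k,s)=\sum_{t,u\in H}(t,s)\otimes(u,s)\otimes(u^{-1}t^{-1}k,s)$ and the two action rules $(r,s)\delta_f(x)=\delta_{r,e}\delta_{sf}(x)$, $(r,s)\rho_\tau(l)=\delta_{r,s\tau s^{-1}}\rho_r(l)$ shows that the three constraints coming from the factors at $x-\tfrac12,\ x,\ x+\tfrac12$ collapse to $u=e$ and $k=e$; reindexing the summation variable then recovers $v_h(x)$. Normality of $H$ is used here so that the conjugates $h'hh'^{-1}$ stay in $H$, which is also the reason only $v_h$ with $h\in H$ can appear. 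Since $\varepsilon(k,s)=\delta_{k,e}$, these identities say exactly that $v_h(x),w_g(l)\in\mathcal{A}_{(H,G)}$.

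For the reverse inclusion I would use $\mathcal{A}_{(H,G)}=z_{(H,G)}(\mathcal{F}_H)$, so that it suffices to show $z_{(H,G)}(M)$ lies in the $C^*$-subalgebra $\mathcal{B}$ generated by the $v$'s and $w$'s for every monomial $M$ in $\delta_g(x),\rho_\tau(l)$, since such monomials span a dense $*$-subalgebra. The computational core is an explicit evaluation of the integral on a monomial. Setting $P_s=\sum_{h\in H}(h,s)$, I would first check that $P_s$ is grouplike, $\bigtriangleup(P_s)=P_s\otimes P_s$ and $\varepsilon(P_s)=1$, and that it implements the global gauge automorphism $\delta_g(\cdot)\mapsto\delta_{sg}(\cdot)$, $\rho_\tau(\cdot)\mapsto\rho_{s\tau s^{-1}}(\cdot)$ of $\mathcal{F}_H$. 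Then, feeding the iterated coproduct of $(e,s)$ into the two action rules, the telescoping labels $r_i=t_{i-1}^{-1}t_i$ are forced to $e$ at each $\delta$-factor and to $s\tau_i s^{-1}$ at each $\rho$-factor, which pins down all intermediate labels and leaves a single surviving term. This yields
\[
z_{(H,G)}(M)=\frac{1}{|G|}\sum_{s\in G}P_s(M)
\]
whenever the ordered product of the $\rho$-exponents occurring in $M$ equals $e$, and $z_{(H,G)}(M)=0$ otherwise.

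With this formula the generators are recognized as gauge averages of minimal two-site blocks, namely $w_g(l)=|G|\,z_{(H,G)}\bigl(\delta_e(l-\tfrac12)\delta_g(l+\tfrac12)\bigr)$ and $v_h(x)=|G|\,z_{(H,G)}\bigl(\rho_{h^{-1}}(x-\tfrac12)\delta_e(x)\rho_h(x+\tfrac12)\bigr)$, both of which are manifestly charge conserving. To reach an arbitrary charge-conserving $M$, I would first bring it into the normal form carrying at most one $\delta$ at each integer site and one $\rho$ at each half-integer site, using the relations of Definition 2.2, and then exhibit $z_{(H,G)}(M)$ as a product of block averages tiled along the chain. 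The mechanism that makes this work is that when two neighbouring block averages are multiplied, the $\delta$-orthogonality $\delta_{g_1}(x)\delta_{g_2}(x)=\delta_{g_1,g_2}\delta_{g_1}(x)$ at the shared integer site synchronizes the two gauge parameters, collapsing the double sum $\sum_{s,s'}$ to the single sum $\sum_s$ and reproducing $|G|^{-1}\sum_s P_s$ of the concatenated monomial. Iterating the gluing over the finite support of $M$ expresses $z_{(H,G)}(M)$ as an explicit word in the $v_h(x)$ and $w_g(l)$, hence in $\mathcal{B}$. Passing to $C^*$-closures and noting $w_g(l)^*=w_g(l)$ and $v_h(x)^*=v_{h^{-1}}(x)$, so that $\mathcal{B}$ is genuinely a $*$-closed $C^*$-subalgebra, the two inclusions combine to give $\mathcal{A}_{(H,G)}=\mathcal{B}$.

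I expect the main obstacle to be the bookkeeping in this last gluing step. Because gauge averaging does not distribute over products, one must verify that cutting $M$ at each integer site produces blocks whose averages are exactly the stated $v$'s and $w$'s and not merely averages of residual blocks; in particular a single $\rho_\tau(x+\tfrac12)$ is shared between the $v$-block at $x$ (as its right exponent) and the $v$-block at $x+1$ (as its left exponent), and the relations $\rho_h(l)\delta_g(x)=\delta_{hg}(x)\rho_h(l)$ and $\rho_{h_1}(l)\rho_{h_2}(l')=\rho_{h_2}(l')\rho_{h_2^{-1}h_1h_2}(l)$ reshuffle the group labels as the boundary factors are split off. Checking that these reshuffled labels match consistently across every cut, and that the charge-conservation condition is respected block by block, is the delicate part of the argument.
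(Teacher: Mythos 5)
Your proposal is correct in outline, and its two key formulas do check out, but it takes a genuinely different route from the paper. The paper proves only your ``easy'' inclusion by direct computation (that the generators are fixed by $\gamma_{z_{(H,G)}}$); for the hard inclusion it never touches gauge-averaging combinatorics at all. Instead it factorizes $z_{(G,G)}=z_{(G,G)}z_{(G,\{e\})}$, quotes from \cite{K.Szl} that $\gamma_{z_{(G,\{e\})}}$ is the projection onto operators with trivial twist (generated by the $\delta_g(x)$ and $v_h(x)$), computes $\gamma_{z_{(G,G)}}$ on strings of $\delta$'s to produce the $w_g(l)$, and then uses the projection inequality $\gamma_{z_{(H,G)}}\le\gamma_{z_{(G,G)}}|_{\mathcal{F}_H}$ to get $\mathcal{A}_{(H,G)}\subseteq\gamma_{z_{(G,G)}}(\mathcal{F}_H)$; the two inclusions then sandwich $\mathcal{A}_{(H,G)}$ between the algebra generated by the $v$'s and $w$'s and itself. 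Your proof replaces the appeal to the $D(G)$-theory of \cite{K.Szl} by an explicit evaluation of the integral: the formula $z_{(H,G)}(M)=\frac{1}{|G|}\sum_{s\in G}P_s(M)$ for charge-conserving monomials (zero otherwise), with $P_s=\sum_{h\in H}(h,s)$ grouplike and implementing the gauge automorphisms, is correct, and the tiling mechanism genuinely works: for instance, for $M=\rho_{\tau_1}(\tfrac12)\delta_{g_1}(1)\rho_{\tau_2}(\tfrac32)\delta_{g_2}(2)\rho_{\tau_3}(\tfrac52)$ with $\tau_1\tau_2\tau_3=e$ one finds $|G|\,z_{(H,G)}(M)=v_a(1)w_g(\tfrac32)v_b(2)$ with $a=g_1^{-1}\tau_1^{-1}g_1$, $g=g_1^{-1}\tau_3^{-1}g_2$, $b=g_2^{-1}\tau_3 g_2$, the middle $\rho$ splitting across the two $v$'s consistently precisely because of charge conservation. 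What your route buys is a self-contained, elementary argument with explicit formulas for the conditional expectation; what the paper's route buys is brevity, at the price of resting on structure results imported from the $H=G$ case.

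One genuine, though easily repaired, defect: your tiling claim fails as stated for monomials having no $\delta$ at some integer site of their support, and your normal form (``at most one $\delta$ at each integer site'') allows exactly such monomials. For example, $M=\rho_\tau(\tfrac12)\rho_{\tau^{-1}}(\tfrac32)$ is charge conserving, yet $z_{(H,G)}(M)=\frac{1}{|G|}\sum_{f\in G}v_{f^{-1}\tau^{-1}f}(1)$ is a sum over a conjugacy class of $v$'s, not a scalar multiple of a single product of generators; the $\delta$-orthogonality mechanism you invoke to synchronize gauge parameters is simply unavailable because there is no $\delta$ at site $1$. The fix is cheap: insert $I=\sum_f\delta_f(x)$ at every bare integer site first, and then apply your block decomposition to each resulting term, so that $z_{(H,G)}(M)$ is exhibited as a linear combination of products of the generators --- which is all that ``generated'' requires. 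With that amendment, and with the bookkeeping you already flagged carried out (it does close up, as the three-site computation above shows), your proof is complete.
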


\begin{proof}
Since $z_{_{(G,G)}}=z_{_{(G,G)}}z_{_{(G,\{e\})}}$, where $z_{_{(G,\{e\})}}$ is the unique integral of Hopf algebra $D(G, \{e\})$, we have that
\begin{eqnarray*}
\gamma_{z_{_{(G,G)}}}({\mathcal{F}}_H)=\gamma_{z_{_{(G,G)}}z_{_{(G,\{e\})}}}({\mathcal{F}}_H)
=\gamma_{z_{_{(G,G)}}}(\gamma_{z_{_{(G,\{e\})}}}({\mathcal{F}}_H)).
\end{eqnarray*}
$\gamma_{z_{_{(G,\{e\})}}}$ is the projection to operators with trivial twist (\cite{K.Szl}). Hence,
$\gamma_{z_{_{(G,\{e\})}}}({\mathcal{F}}_H)$ is generated by
\begin{eqnarray*}
    \left\{\delta_g(x), v_h(x)\colon
 g\in G, h\in H, x\in\Bbb{Z}\right\}.
\end{eqnarray*}
 Moreover, we can obtain that
$\gamma_{z_{_{(G,G)}}}({\mathcal{F}}_H)$ is generated by
\begin{eqnarray*}
    \left\{w_g(l), v_h(x)\colon
 g\in G, h\in H, x\in\Bbb{Z}, l\in\Bbb{Z}+\frac{1}{2}\right\},
\end{eqnarray*}
 since
\begin{eqnarray*}
    \gamma_{z_{_{(G,G)}}}\left(\delta_{g_1}(1)\delta_{g_2}(2)\cdots\delta_{g_n}(n)\right)
=\frac{1}{|G|}w_{{g_1}^{-1}{g_2}}(\frac{3}{2})w_{{g_2}^{-1}{g_3}}(\frac{5}{2})
\cdots w_{{g_{n-1}}^{-1}{g_n}}(n-\frac{1}{2}).
\end{eqnarray*}

Now let us consider $\gamma_{z_{_{(G,G)}}}|_{{\mathcal{F}}_H}$, the restriction of $\gamma_{z_{(G,G)}}$ on ${\mathcal{F}}_H$, and $\gamma_{z_{_{(H,G)}}}$ as  projections on ${\mathcal{F}}_{H}$, we have that
$$\gamma_{z_{_{(G,G)}}}|_{{\mathcal{F}}_H}\gamma_{z_{_{(H,G)}}}
=\gamma_{z_{_{(H,G)}}}\gamma_{z_{_{(G,G)}}}|_{{\mathcal{F}}_H}=\gamma_{z_{_{(H,G)}}},$$
then $$\gamma_{z_{_{(H,G)}}}\leq \gamma_{z_{_{(G,G)}}}|_{{\mathcal{F}}_H},$$
which implies $\gamma_{z_{_{(H,G)}}}({\mathcal{F}}_H)\subseteq \gamma_{z_{_{(G,G)}}}({\mathcal{F}}_H)$.
Again, for $g\in G, h\in H, x\in\Bbb{Z}, l\in\Bbb{Z}+\frac{1}{2},$
$$\gamma_{z_{_{(H,G)}}}(w_g(l))=w_g(l), \ \ \ \  \gamma_{z_{_{(H,G)}}}(v_h(x))=v_h(x).$$
Hence, $\gamma_{z_{_{(H,G)}}}({\mathcal{F}}_H)$ is generated by
 \begin{eqnarray*}
\left\{v_h(x), w_g(l)\colon
 h\in H, g\in G, x\in\Bbb{Z}, l\in\Bbb{Z}+\frac{1}{2}\right\}.
 \end{eqnarray*}
\end{proof}

\section{The characterization of the observable algebra ${\mathcal{A}}_{(H,G)}$}
In this section, we identify $H$ with the group algebra $\Bbb{C}H$, and $\widehat{G}$ the set of complex functions on $G$.
We will construct iterated twisted tensor product algebras of any number of factors.
To do this, let us recall briefly the concept of a twisted tensor
product of algebras (\cite{A.Cap}).

\begin{definition}
Let $A$ and $B$ be two unital associative algebras.
Suppose that $R\colon B\bigotimes A\rightarrow A\bigotimes B$ is a linear map such that
\begin{eqnarray*}
    \begin{array}{rcl}
R\circ(id_B\bigotimes m_A)&=&(m_A\bigotimes id_B)\circ(id_A\bigotimes R)\circ(R\bigotimes id_A),\\
R\circ(m_B\bigotimes id_A)&=&(id_A\bigotimes m_B)\circ(R \bigotimes id_B)\circ(id_B\bigotimes R),
 \end{array}
\end{eqnarray*}
then $m_R=(m_A\bigotimes m_B)\circ(id_A\bigotimes R\bigotimes id_B)$ is an associative product on $A\bigotimes B$. Here, $m_A$ and $m_B$ denote the multiplication in algebras $A$ and $B$, respectively.
In this case, we call $R$ a twisting map, and $(A\bigotimes B, m_R)$
a twisted tensor product of $A$ and $B$, which has $A \bigotimes B$ as underlying vector space endowed with the multiplication $m_R$, simply denoted by
$A \bigotimes_R B$.
 Using a Sweedler-type notation, we denote by
$R(b\otimes a)=a_R\otimes b_R$ for $a\in A$ and $b\in B$.
\end{definition}

Observe that the multiplication $m_R$ in the twisted  product $A \bigotimes_R B$ of algebras $A$ and $B$ can be given in the following form:
\begin{eqnarray*}
    \begin{array}{c}
(a\otimes b)(a'\otimes b')=aa'_R\otimes b_Rb',
 \end{array}
\end{eqnarray*}
where, as already mentioned, the Sweedler-type notation for the twisting map $R$ has been used, i.e.,
$R(b\otimes a)=a_R\otimes b_R$ for $a\in A$ and $b\in B$.

\begin{example}
(1) Consider the usual flip $\tau\colon B\bigotimes A\rightarrow A\bigotimes B$ defined by
$$\tau(b\otimes a)=a\otimes b.$$
It is obvious that $\tau$ satisfies all conditions for the twisting map, and then it give rise to the standard tensor product of algebras $A\bigotimes B$.

(2) Suppose that $M$ is a Hopf algebra over $\Bbb{C}$, and $B$ is a (left) $M$-module algebra, that is, $B$ is an algebra which is a left $M$-module such that $m\cdot(ab)=\sum\limits_{(m)}(m_{(1)}\cdot a)(m_{(2)}\cdot b)$ and
$m\cdot 1_B=\varepsilon(m)1_B$, for all $a,b\in B$, $m\in M$.

Let the map $R\colon M\bigotimes B \rightarrow B\bigotimes M$ be defined by
\begin{eqnarray*}
    \begin{array}{c}
R(m\otimes b)=\sum\limits_{(m)}(m_{(1)}\cdot b)\otimes m_{(2)}.
 \end{array}
\end{eqnarray*}
One can show that $R$ is a twisting map, and then obtain the algebra $B\bigotimes_R M$, which is $B\bigotimes M$ as a vector space with multiplication
\begin{eqnarray*}
    \begin{array}{c}
    (a\otimes m)(b\otimes n)=\sum\limits_{(m)}a(m_{(1)}\cdot b)\otimes m_{(2)}n.
 \end{array}
\end{eqnarray*}
From the definition of the smash product, it is easy to see that $B\bigotimes_R M$ coincides with the ordinary smash product $B\# M$ introduced in \cite{R.G.He}.
\end{example}

In order to study the construction of iterated twisted tensor products,
we consider three twisted tensor products $A\bigotimes_{R_1} B$, $B\bigotimes_{R_2} C$ and $A\bigotimes_{R_3} C$, and the maps
\begin{eqnarray*}
    \begin{array}{c}
T_1 \colon C\bigotimes (A\bigotimes_{R_1} B) \rightarrow (A\bigotimes_{R_1} B)\bigotimes C
 \end{array}
\end{eqnarray*}
 defined by $T_1=(id_A\bigotimes R_2)\circ(R_3\bigotimes id_B)$
 and
 \begin{eqnarray*}
    \begin{array}{c}
 T_2 \colon (B\bigotimes_{R_2} C)\bigotimes A \rightarrow A\bigotimes (B\bigotimes_{R_2} C) \end{array}
\end{eqnarray*}
 defined by $T_2=(R_1\bigotimes id_C)\circ(id_B\bigotimes R_3)$
 associated to $R_1, R_2$ and $R_3$. The following lemma states a sufficient and necessary condition
 ensuring that both $T_1$ and $T_2$ are twisting maps.

\begin{lemma}$^{\cite{P.Jar}}$
The following statements are equivalent:

 (1)\ $T_1$ is a twisting map.

 (2)\ $T_2$ is a twisting map.

 (3)\ The maps $R_1, R_2, R_3$ satisfy the following compatibility condition (called the hexagon equation):
\begin{eqnarray*}
    \begin{array}{c}
 (id_A\bigotimes R_2)\circ (R_3 \bigotimes id_B)\circ(id_C\bigotimes R_1)=(R_1\bigotimes id_C)\circ (id_B\bigotimes R_3)\circ (R_2\bigotimes id_A).
  \end{array}
\end{eqnarray*}
Moreover, if all the three conditions are satisfied, then $A\bigotimes_{T_2} (B\bigotimes_{R_2} C)$ and $(A\bigotimes_{R_1} B)\bigotimes_{T_1} C$ are equal. In this case, we will denote this algebra by $A\bigotimes_{R_1} B\bigotimes_{R_2} C$, which is called the iterated twisted tensor product.
\end{lemma}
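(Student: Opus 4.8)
The plan is to establish the three-way equivalence by reducing each of (1) and (2) separately to the hexagon equation (3), and then to settle the final identification of the two algebras by a direct comparison of multiplications. Throughout I would write $R_i(b\otimes a)=a_{R_i}\otimes b_{R_i}$ in the Sweedler-type notation already fixed above, and use the explicit product formula $(a\otimes b)(a'\otimes b')=aa'_{R_1}\otimes b_{R_1}b'$ for $A\bigotimes_{R_1}B$. Since $T_1$ and $T_2$ are assembled from $R_2,R_3$ and $R_1,R_3$ respectively, each of the two defining conditions of a twisting map will split into a piece governed by a single $R_i$ and a piece where the three maps interlock; the first kind will be automatic and the second will carry all the content.

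First I would prove (1)$\Leftrightarrow$(3). For $T_1\colon C\bigotimes(A\bigotimes_{R_1}B)\to(A\bigotimes_{R_1}B)\bigotimes C$ to be a twisting map for the pair $A\bigotimes_{R_1}B$ and $C$ it must satisfy the two conditions of the twisting-map definition, one expressing compatibility with the multiplication $m_{R_1}$ of $A\bigotimes_{R_1}B$, the other with the multiplication $m_C$. Expanding $T_1=(id_A\bigotimes R_2)\circ(R_3\bigotimes id_B)$, the condition involving $m_C$ amounts to moving a product $cc'$ first past $A$ and then past $B$; this is controlled separately by the second twisting-map axiom for $R_3$ and for $R_2$, so both sides collapse to the same expression with no appearance of $R_1$, and the condition holds automatically. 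The condition involving $m_{R_1}$ is the crucial one: moving a single $c\in C$ past $(a\otimes b)(a'\otimes b')=aa'_{R_1}\otimes b_{R_1}b'$ forces $R_1,R_2,R_3$ to act in interlocking order, and after expanding both sides and cancelling the contributions already guaranteed by the first axioms for $R_2$ and $R_3$, the surviving identity is precisely the hexagon equation of (3). Reading this equivalence in both directions yields (1)$\Leftrightarrow$(3). The argument for (2)$\Leftrightarrow$(3) is entirely symmetric: for $T_2$ the condition involving $m_A$ is automatic from the twisting-map axioms for $R_1$ and $R_3$, while the condition involving the multiplication $m_{R_2}$ of $B\bigotimes_{R_2}C$ reduces once more to the same hexagon equation. (In either case the unit normalisations for $T_1$ and $T_2$ descend immediately from those for the $R_i$.)

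For the final assertion I would assume (3) and compute the two iterated products explicitly on the common underlying space $A\bigotimes B\bigotimes C$. Computing $T_1(c\otimes a'\otimes b')=a'_{R_3}\otimes b'_{R_2}\otimes(c_{R_3})_{R_2}$ and feeding it into the twisted product formula for $(A\bigotimes_{R_1}B)\bigotimes_{T_1}C$, the product of $a\otimes b\otimes c$ and $a'\otimes b'\otimes c'$ becomes $a(a'_{R_3})_{R_1}\otimes b_{R_1}b'_{R_2}\otimes(c_{R_3})_{R_2}c'$. Carrying out the analogous computation with $T_2(b\otimes c\otimes a')=(a'_{R_3})_{R_1}\otimes b_{R_1}\otimes c_{R_3}$ for $A\bigotimes_{T_2}(B\bigotimes_{R_2}C)$ produces the identical expression. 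Hence the two multiplications coincide, the algebras are equal, and the common notation $A\bigotimes_{R_1}B\bigotimes_{R_2}C$ is justified.

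I expect the main obstacle to be the bookkeeping in the step identifying the $m_{R_1}$-compatibility condition for $T_1$ (and likewise the $m_{R_2}$-condition for $T_2$) with the hexagon equation: this is the one place where all three twisting maps are applied in nested order, and the delicate point is to isolate the genuinely new constraint after systematically discarding the relations already supplied by the individual axioms for $R_1,R_2,R_3$. By contrast, the compatibility with $m_C$ (resp. $m_A$), the unit conditions, and the final matching of the two explicit composites are all routine.
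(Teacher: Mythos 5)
The paper itself gives no proof of this lemma --- it is imported verbatim from \cite{P.Jar} --- so there is no internal argument to compare against; your proposal is a correct reconstruction of the standard proof from that reference. Your two key claims check out: the $m_C$-compatibility of $T_1$ (resp.\ the $m_A$-compatibility of $T_2$) is automatic from the twisting-map axioms for $R_2,R_3$ (resp.\ $R_1,R_3$) alone; the remaining compatibility condition, once the axioms for the individual $R_i$ are used to expand both sides, is exactly the hexagon identity sandwiched between left multiplication by $a$ in the $A$-slot and right multiplication by $b'$ in the $B$-slot; and both iterated products evaluate to $a(a'_{R_3})_{R_1}\otimes b_{R_1}b'_{R_2}\otimes (c_{R_3})_{R_2}c'$, so the two algebras coincide (note this last computation needs no hexagon at all --- the hexagon is only what makes either product associative). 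One point you should make explicit rather than parenthetical: extracting the hexagon \emph{from} the twisting-map condition (the implications (1)$\Rightarrow$(3) and (2)$\Rightarrow$(3)) requires specializing $a=1_A$ and $b'=1_B$, hence the unit normalisations $R(b\otimes 1_A)=1_A\otimes b$ and $R(1_B\otimes a)=a\otimes 1_B$; these are part of the definition of a twisting map in \cite{P.Jar} but are omitted from Definition 3.1 of the present paper, so your appeal to ``unit normalisations'' is doing real work and should be promoted to a stated hypothesis.
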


Now, we consider the group algebra $\Bbb{C}G$, endowed with a comultiplication $\bigtriangleup(g)=g\otimes g$, a counit $\varepsilon(g)=1$, antipode $S(g)=g^{-1}$ and $g^*=g^{-1}$ for all $g\in G$, such that $\Bbb{C}G$ is a $C^*$-Hopf algebra. From now on, we use $G$ for $\Bbb{C}G$. The dual of $G$ is $\widehat{G}$, with $\bigtriangleup(\delta_g)=\sum\limits_{t\in G}\delta_t\otimes\delta_{t^{-1}g}$, $\varepsilon(\delta_g)=\delta_{g,e}$, $S(\delta_g)=\delta_{g^{-1}}$ and $\delta_g^*=\delta_g$ for all $g\in G$.
There is a natural pairing between $G$ and $\widehat{G}$ given by
\begin{eqnarray*}
    \begin{array}{c}
\langle \ , \ \rangle\colon G\bigotimes \widehat{G}\rightarrow \Bbb{C}, \ \
g\otimes \delta_s \mapsto\langle g, \delta_s \rangle\colon=\delta_s(g),\\
\langle \ , \ \rangle\colon \widehat{G} \bigotimes G\rightarrow \Bbb{C}, \ \
\delta_s\otimes g \mapsto\langle\delta_s, g \rangle\colon=\delta_s(g).
  \end{array}
\end{eqnarray*}
Associated to this pairing we have the natural action of $G$ on $\widehat{G}$ and that on $\widehat{G}$ on $G$ given by the Sweedler's arrows:
\begin{eqnarray*}
    \begin{array}{c}
g\rightarrow \delta_s \colon= \sum\limits_{(\delta_s)}{\delta_s}_{(1)}\langle g,{\delta_s}_{(2)}\rangle=\delta_{sg^{-1}},\\
\delta_s\rightarrow g \colon= \sum\limits_{(g)}g_{(1)}\langle \delta_s,g_{(2)}\rangle=g\delta_{s}(g),
  \end{array}
\end{eqnarray*}
where $\bigtriangleup(\delta_s)=\sum\limits_{(\delta_s)}{\delta_s}_{(1)}\otimes {\delta_s}_{(2)}=\sum\limits_{t\in G}{\delta_t}\otimes\delta_{t^{-1}s}$, and $\bigtriangleup(g)=\sum\limits_{(g)}g_{(1)}\otimes g_{(2)}=g\otimes g$.

For every $n\in \Bbb{Z}$, take $A_n\colon=H$ if $n$ is even and $A_n\colon=\widehat{G}$ if $n$ is odd, and define the maps:
$$\begin{array}{ccrcl}
 R_{2n,2n+1}&\colon &A_{2n+1}\bigotimes A_{2n}&\longrightarrow& A_{2n}\bigotimes A_{2n+1}\\
   &&\delta_g\otimes h&\longmapsto& \sum\limits_{(\delta_g)}({\delta_g}_{(1)} \rightarrow h ) \otimes {\delta_g}_{(2)}=h\otimes \delta_{h^{-1}g}, \\
 R_{2n-1,2n}&\colon &A_{2n}\bigotimes A_{2n-1}&\longrightarrow& A_{2n-1}\bigotimes A_{2n}\\
   && h\otimes\delta_g&\longmapsto& \sum\limits_{(h)}(h_{(1)}\rightarrow \delta_g)\otimes h_{(2)}=\delta_{g h^{-1}}\otimes h,\\
 R_{i,j}&\colon &A_j\bigotimes A_i&\longrightarrow& A_i\bigotimes A_j\\
   && x_j\otimes x_i&\longmapsto& x_i\otimes x_j,  \ \ \ {\mathrm{if}} \ j-i\geq 2.
\end{array}$$
It is clear that all of them are twisting maps.

\begin{proposition}
$R_{0,1}, R_{1,2}, R_{0,2}$ and $R_{1,2}, R_{2,3}, R_{1,3}$ are compatible, respectively.
\end{proposition}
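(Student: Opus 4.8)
The plan is to unwind the word \emph{compatible} by means of the preceding lemma: three twisting maps are compatible precisely when they satisfy the hexagon equation, condition (3) of that lemma. So for each of the two triples I would first fix the correct assignment of the abstract symbols $A,B,C$ and $R_1,R_2,R_3$ to the concrete algebras and maps at hand, then evaluate both sides of the hexagon identity on a single elementary tensor and check that the outputs agree. The key point to observe at the outset is that in each triple the ``outer'' map ($R_{0,2}$ in the first case, $R_{1,3}$ in the second) joins two factors of equal type with indices differing by at least $2$, hence is the plain flip, while the other two maps are the genuine twists between $H$ and $\widehat{G}$.

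For the first triple I would set $A=A_0=H$, $B=A_1=\widehat{G}$, $C=A_2=H$, so that $R_1=R_{0,1}$, $R_2=R_{1,2}$, $R_3=R_{0,2}$, the last being the flip. Both sides of the hexagon are then maps $H\otimes\widehat{G}\otimes H\to H\otimes\widehat{G}\otimes H$, and I would apply them to a generator $k\otimes\delta_g\otimes h$ with $k,h\in H$. Tracing the left-hand composition $(id_A\otimes R_2)\circ(R_3\otimes id_B)\circ(id_C\otimes R_1)$ step by step, using $\delta_g\otimes h\mapsto h\otimes\delta_{h^{-1}g}$ for $R_{0,1}$, the flip for $R_{0,2}$, and $h\otimes\delta_g\mapsto\delta_{gh^{-1}}\otimes h$ for $R_{1,2}$, gives $k\otimes\delta_g\otimes h\mapsto k\otimes h\otimes\delta_{h^{-1}g}\mapsto h\otimes k\otimes\delta_{h^{-1}g}\mapsto h\otimes\delta_{h^{-1}gk^{-1}}\otimes k$. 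Running the right-hand composition $(R_1\otimes id_C)\circ(id_B\otimes R_3)\circ(R_2\otimes id_A)$ on the same input yields $k\otimes\delta_g\otimes h\mapsto\delta_{gk^{-1}}\otimes k\otimes h\mapsto\delta_{gk^{-1}}\otimes h\otimes k\mapsto h\otimes\delta_{h^{-1}gk^{-1}}\otimes k$. The two outputs coincide, so the hexagon holds.

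For the second triple I would proceed identically with $A=A_1=\widehat{G}$, $B=A_2=H$, $C=A_3=\widehat{G}$, $R_1=R_{1,2}$, $R_2=R_{2,3}$, $R_3=R_{1,3}$, where now $R_{1,3}$ is the flip; evaluating both sides on $\delta_k\otimes h\otimes\delta_g$ should return $\delta_{gh^{-1}}\otimes h\otimes\delta_{h^{-1}k}$ in each case, giving compatibility. I expect no conceptual obstacle here: the argument is a routine verification once the conventions are fixed. The only care needed is bookkeeping, namely tracking which tensor slot each factor occupies through the six-fold composition and confirming that the group-element subscripts produced by the two chains of twists agree. Since the third map in each triple is merely a flip, it only transposes two equal-type factors without altering any subscript, which is exactly what forces the two sides to collapse to the same expression.
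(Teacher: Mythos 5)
Your proposal is correct and follows essentially the same route as the paper: both invoke the equivalence of compatibility with the hexagon equation and verify that equation on an elementary tensor of $A_2\otimes A_1\otimes A_0$, obtaining $h\otimes\delta_{h^{-1}gk^{-1}}\otimes k$ on both sides, exactly as the paper does with $h_1\otimes\delta_{h_1^{-1}gh_2^{-1}}\otimes h_2$. Your treatment of the second triple (stating the common value $\delta_{gh^{-1}}\otimes h\otimes\delta_{h^{-1}k}$) is also correct and matches the paper's ``similarly'' in spirit.
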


\begin{proof}
It suffices to show that $R_{0,1}, R_{1,2}, R_{0,2}$ are compatible. To do this,
 apply the left-hand side of the hexagon equation to a generator $h_2\otimes \delta_g\otimes h_1$ of $A_2\bigotimes A_1\bigotimes A_0$, we get
\begin{eqnarray*}
    \begin{array}{rcl}
&&(id_H\otimes R_{1,2})\circ(R_{0,2}\otimes id_{\widehat{G}})\circ(id_H\otimes R_{0,1})(h_2\otimes \delta_g\otimes h_1)\\
&=&(id_H\otimes R_{1,2})\circ(R_{0,2}\otimes id_{\widehat{G}})(h_2\otimes h_1\otimes\delta_{h_1^{-1}g})\\
&=&(id_H\otimes R_{1,2})(h_1\otimes h_2\otimes\delta_{h_1^{-1}g})\\
&=&h_1\otimes \delta_{h_1^{-1}gh_2^{-1}}\otimes h_2.
\end{array}
\end{eqnarray*}
On the other hand, for the right hand side we obtain that
\begin{eqnarray*}
    \begin{array}{rcl}
&&(R_{0,1}\otimes id_H)\circ(id_{\widehat{G}}\otimes R_{0,2})\circ(R_{1,2}\otimes id_H)(h_2\otimes \delta_g\otimes h_1)\\
&=&(R_{0,1}\otimes id_H)\circ(id_{\widehat{G}}\otimes R_{0,2})(\delta_{g h_2^{-1}}\otimes h_2 \otimes h_1)\\
&=&(R_{0,1}\otimes id_H)(\delta_{g h_2^{-1}}\otimes h_1\otimes h_2)\\
&=&h_1\otimes \delta_{h_1^{-1}gh_2^{-1}}\otimes h_2.
\end{array}
\end{eqnarray*}
Now, we have shown $R_{0,1}, R_{1,2}$ and $R_{0,2}$ are compatible. Similarly, one can prove $R_{1,2}, R_{2,3}$ and $R_{1,3}$ are compatible.
\end{proof}

It follows from Proposition 3.1 and Lemma 3.1 that one can construct the algebras $A_0\bigotimes_{R_{0,1}}A_1\bigotimes_{R_{1,2}} A_2$ and $A_1\bigotimes_{R_{1,2}} A_2\bigotimes_{R_{2,3}}A_3$, in which the multiplications can be given, respectively, by the formulas
$$(h_1\otimes \delta_{g_1}\otimes f_1)(h_2\otimes \delta_{g_2}\otimes f_2)=h_1h_2\otimes\delta_{h_2^{-1}g_1}\delta_{g_2f_1^{-1}}\otimes f_1f_2,$$
$$(\delta_{g_1}\otimes h_1\otimes \delta_{s_1})(\delta_{g_2}\otimes h_2\otimes \delta_{s_2})=\delta_{g_1}\delta_{g_2h_1^{-1}}\otimes h_1h_2\otimes\delta_{h_2^{-1}s_1}\delta_{s_2}.$$
Moreover, we can define a map
$$\begin{array}{cccc}
\theta \colon& A_0\bigotimes_{R_{0,1}}A_1\bigotimes_{R_{1,2}} A_2 &\longrightarrow& A_0\bigotimes_{R_{0,1}}A_1\bigotimes_{R_{1,2}} A_2\\
&h_1\otimes \delta_g\otimes h_2 &\longrightarrow& h_1^{-1}\otimes \delta_{h_1gh_2}\otimes h_2^{-1}.
\end{array}$$
Then $\theta$ satisfies the following properties: for any $x,y\in A_0\bigotimes_{R_{0,1}}A_1\bigotimes_{R_{1,2}} A_2$,
\begin{eqnarray*}
\begin{array}{c}
\theta(\theta(x))=x, \ \ \ \ \ \theta(xy)=\theta(y)\theta(x).
\end{array}
\end{eqnarray*}
Thus $A_0\bigotimes_{R_{0,1}}A_1\bigotimes_{R_{1,2}} A_2$ is a *-algebra by means of \begin{eqnarray*}
\begin{array}{c}
(h_1\otimes \delta_g\otimes h_2)^*=\theta(h_1\otimes \delta_g\otimes h_2)
 \end{array}
\end{eqnarray*}
for $h_1\otimes \delta_g\otimes h_2\in A_0\bigotimes_{R_{0,1}}A_1\bigotimes_{R_{1,2}} A_2$.

Similarly, for $\delta_g\otimes h\otimes \delta_s\in A_1\bigotimes_{R_{1,2}} A_2\bigotimes_{R_{2,3}}A_3$, set
$$(\delta_g\otimes h\otimes \delta_s)^*=\delta_{gh}\otimes h^{-1}\otimes \delta_{hs}.$$
Then $A_1\bigotimes_{R_{1,2}} A_2\bigotimes_{R_{2,3}}A_3$ is also a *-algebra.

Moreover, we have the following proposition.
\begin{proposition}
 $A_0\bigotimes_{R_{0,1}}A_1\bigotimes_{R_{1,2}} A_2$ and $A_1\bigotimes_{R_{1,2}} A_2\bigotimes_{R_{2,3}}A_3$ are $C^*$-algebras.
\end{proposition}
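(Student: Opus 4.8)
The plan is to prove each of the two three-fold twisted tensor products is a $C^*$-algebra by exhibiting a faithful $*$-representation on a finite-dimensional Hilbert space. The guiding principle is standard: a finite-dimensional $*$-algebra $B$ becomes a $C^*$-algebra as soon as it admits a faithful $*$-representation $\lambda\colon B\to\mathcal{B}(\mathcal{H})$, because then $\|b\|:=\|\lambda(b)\|$ is a norm (by faithfulness), it satisfies the $C^*$-identity $\|b^*b\|=\|\lambda(b)\|^2=\|b\|^2$ inherited from $\mathcal{B}(\mathcal{H})$, and completeness is automatic in finite dimensions. Since it has already been shown above that $A_0\bigotimes_{R_{0,1}}A_1\bigotimes_{R_{1,2}}A_2$ and $A_1\bigotimes_{R_{1,2}}A_2\bigotimes_{R_{2,3}}A_3$ are finite-dimensional unital $*$-algebras, it remains only to produce such representations.

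To build the representation I would use the GNS construction associated with a faithful positive functional. For $B=A_0\bigotimes_{R_{0,1}}A_1\bigotimes_{R_{1,2}}A_2=H\bigotimes\widehat{G}\bigotimes H$ I would set $\tau(h_1\otimes\delta_g\otimes h_2)=\delta_{h_1,e}\delta_{h_2,e}$ and extend linearly, and for $A_1\bigotimes_{R_{1,2}}A_2\bigotimes_{R_{2,3}}A_3=\widehat{G}\bigotimes H\bigotimes\widehat{G}$ the analogous functional $\tau(\delta_g\otimes h\otimes\delta_s)=\delta_{h,e}$. The crucial computation is to evaluate $\tau$ on $x_i^*x_j$ for two basis vectors, using the explicit multiplication coming from $R_{0,1},R_{1,2}$ and the involution $(h_1\otimes\delta_g\otimes h_2)^*=h_1^{-1}\otimes\delta_{h_1gh_2}\otimes h_2^{-1}$ recorded above. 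Carrying this out, one finds $\tau(x_i^*x_j)=\delta_{x_i,x_j}$; in particular $\langle x,y\rangle:=\tau(x^*y)$ is an inner product for which the standard basis is orthonormal, so $\tau$ is simultaneously positive and faithful. (For the first algebra one checks directly that $(h_1\otimes\delta_g\otimes h_2)^*(h_1\otimes\delta_g\otimes h_2)=e\otimes\delta_{gh_2}\otimes e$, whose value under $\tau$ is $1$, while the off-diagonal terms vanish.)

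With a faithful positive $\tau$ in hand, I would let $\mathcal{H}$ be $B$ equipped with $\langle x,y\rangle=\tau(x^*y)$ and take $\lambda$ to be the left regular representation $\lambda(a)x=ax$. The relation $\langle\lambda(a)x,y\rangle=\tau\big(x^*a^*y\big)=\langle x,\lambda(a^*)y\rangle$, which uses only that $*$ is an anti-automorphism (established above via the map $\theta$ through $\theta(xy)=\theta(y)\theta(x)$), shows $\lambda(a)^*=\lambda(a^*)$, so $\lambda$ is a $*$-representation; and $\lambda(a)1=a$ together with faithfulness of $\tau$ shows $\lambda$ is injective. The principle of the first paragraph then finishes the proof, identically for both algebras.

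I expect the only genuine obstacle to be the positivity-and-faithfulness step, that is, the orthonormality identity $\tau(x_i^*x_j)=\delta_{x_i,x_j}$. This is precisely where the specific twisting maps and, through them, the normality of $H$ in $G$ enter (normality guarantees that the relevant $G$-conjugates of elements of $H$ remain in $H$, so that the structure constants of the product are well defined); everything downstream is the routine GNS-plus-finite-dimensionality argument. A small point worth settling first is the choice of $\tau$: one wants the functional reading off the coefficient of the unit, so that $x^*x$ lands on the identity component and $\tau(x^*x)>0$.
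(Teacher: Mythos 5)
Your proof is correct, and it reaches the conclusion by a genuinely different route than the paper. Both arguments rest on the same principle---a finite-dimensional unital *-algebra admitting a faithful *-representation is a $C^*$-algebra---but you manufacture the representation abstractly by GNS from an explicit Haar-type functional, whereas the paper writes down a concrete representation directly: for $A_{0,2}=H\bigotimes_{R_{0,1}}\widehat{G}\bigotimes_{R_{1,2}}H$ it takes ${\mathcal{H}}_{0,2}=L^2(\widehat{G},h)\otimes L^2(\widehat{G},h)$, lets the two copies of $H$ act by right translation in the respective legs, $(\pi_{0,2}(g^{(0)})\psi)(g_0,g_2)=\psi(g_0g^{(0)},g_2)$ and similarly for $g^{(2)}$, and lets $\delta_g\in\widehat{G}$ act as multiplication by $\delta_g(g_0^{-1}g_2)$; for $A_{1,3}$ it uses $L^2(G,\delta_e)\otimes L^2(G,\delta_e)$ instead. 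The paper merely asserts faithfulness of these representations (``one can show''), so your version is in this respect more self-contained: your orthonormality identity $\tau(x_i^*x_j)=\delta_{x_i,x_j}$---which I checked against the paper's multiplication formulas $(h_1\otimes\delta_{g_1}\otimes f_1)(h_2\otimes\delta_{g_2}\otimes f_2)=h_1h_2\otimes\delta_{h_2^{-1}g_1}\delta_{g_2f_1^{-1}}\otimes f_1f_2$ and the stated involutions, and which does hold for both algebras---delivers positivity and faithfulness of $\tau$ in one stroke, after which the left regular representation argument is routine. What the paper's construction buys instead is a smaller and more geometric representation ($|G|^2$-dimensional, by translations and multiplication operators in two group variables), matching the two-site lattice picture used elsewhere in the paper. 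One correction to your closing remark: normality of $H$ in $G$ plays no role in this proposition. The twisting maps here involve only the translation actions $\delta_g\otimes h\mapsto h\otimes\delta_{h^{-1}g}$ and $h\otimes\delta_g\mapsto\delta_{gh^{-1}}\otimes h$, which are well defined for any subgroup $H$ of $G$ (the $\delta$'s live on $G$, not on $H$), so your structure constants and the orthonormality computation go through verbatim without it; normality is what makes $D(H;G)$ and the commutation relations $\rho_{h_1}(l)\rho_{h_2}(l')$ in ${\mathcal{F}}_H$ work, not this proposition. Likewise, your $\tau$ is not literally the coefficient of the unit (the unit of the first algebra is $\sum_{g\in G}e\otimes\delta_g\otimes e$) but the natural integral summing those coefficients; this is cosmetic and does not affect your computation.
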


\begin{proof}
Let ${\mathcal{H}}=L^2(\widehat{G},h)$ be a Hilbert space with inner product
\begin{eqnarray*}
\begin{array}{c}
\langle\varphi,\phi\rangle=\frac{1}{|G|}\sum\limits_{g\in G}\varphi(g)\overline{\phi(g)},
\end{array}
\end{eqnarray*}
and ${\mathcal{H}}_{0,2}\triangleq{\mathcal{H}}\otimes{\mathcal{H}}$.

Consider the map $\pi_{0,2}\colon A_0\bigotimes_{R_{0,1}}A_1\bigotimes_{R_{1,2}} A_2\rightarrow {\mathrm{End}}{\mathcal{H}}_{0,2}$ be given by
\begin{eqnarray*}
\begin{array}{rcl}
(\pi_{0,2}(g^{(0)})\psi)(g_0,g_2)&=&\psi(g_0g^{(0)},g_2)\\
(\pi_{0,2}(g^{(2)})\psi)(g_0,g_2)&=&\psi(g_0,g_2g^{(2)})\\
(\pi_{0,2}(\delta_g)\psi)(g_0,g_2)&=&\delta_g(g_0^{-1}g_2)\psi(g_0,g_2),
\end{array}
\end{eqnarray*}
where $\delta_g\in A_1$, $g^{(0)}\in A_0$, $g^{(2)}\in A_2$ and $g_0,g_2\in G$. One can show that $(\pi_{0,2},{\mathcal{H}}_{0,2})$ is a faithful *-representation of $A_{0,2}$.

 For $h_1\otimes \delta_g\otimes h_2\in A_0\bigotimes_{R_{0,1}}A_1\bigotimes_{R_{1,2}} A_2$, set \begin{eqnarray*}
\begin{array}{c}
\|h_1\otimes \delta_g\otimes h_2\|=\|\pi_{0,2}(h_1\otimes \delta_g\otimes h_2)\|,
\end{array}
\end{eqnarray*}
then $(A_0\bigotimes_{R_{0,1}}A_1\bigotimes_{R_{1,2}} A_2, \|\cdot\|)$ is a $C^*$-algebra of finite dimension.

As to $A_{1,3}$, we define faithful *-representation $(\pi_{1,3},{\mathcal{H}}_{1,3})$ of $A_{1,3}$, where ${\mathcal{H}}_{1,3}=\widetilde{\mathcal{H}}\otimes\widetilde{\mathcal{H}}$ with $\widetilde{\mathcal{H}}=L^2(G,\delta_e)$, $\delta_e\in \widehat{G}$ being the Haar measure on the group algebra $G$. Hence, $A_1\bigotimes_{R_{1,2}} A_2\bigotimes_{R_{2,3}}A_3$ is a $C^*$-algebra.
\end{proof}

In the following, by induction, we will construct an iterated twisted tensor product of any number of factors.

\begin{proposition}
The three maps $R_{i,j}, R_{j,k}$ and $R_{i,k}$ are compatible for any $i<j<k$.
\end{proposition}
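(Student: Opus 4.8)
The plan is to reduce the general hexagon equation of Lemma 3.1 (with $R_1=R_{i,j}$, $R_2=R_{j,k}$, $R_3=R_{i,k}$) to a short case analysis, exploiting two structural facts. The decisive observation is that for any triple of integers $i<j<k$ one has $k-i\geq 2$, so by the third clause in the definition of the twisting maps, $R_{i,k}$ is \emph{always} the plain flip $\tau$. Hence the role of $R_3$ in the hexagon equation is played by the flip in every case, and only $R_{i,j}$ and $R_{j,k}$ can be nontrivial; moreover each of these is nontrivial precisely when its two indices are consecutive.

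This splits the verification into three cases. If $j-i\geq 2$ and $k-j\geq 2$, then all three of $R_{i,j},R_{j,k},R_{i,k}$ are flips, the iterated products are ordinary tensor products (Example 3.1(1)), and both sides of the hexagon equation send $x_k\otimes x_j\otimes x_i$ to the reversal $x_i\otimes x_j\otimes x_k$, so compatibility is immediate. If exactly one of $R_{i,j},R_{j,k}$ is nontrivial, I would verify the equation by a direct element-wise computation on a generator $x_k\otimes x_j\otimes x_i$: since $R_{i,k}$ and one of the two remaining maps are flips while the third is a nontrivial twist written in Sweedler form $R(y\otimes x)=x_R\otimes y_R$, both composites merely transport that nontrivial twist to the relevant pair of tensor slots and leave the spectator factor untouched. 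Concretely, when $R_{i,j}$ is the nontrivial map both sides produce $(x_i)_R\otimes (x_j)_R\otimes x_k$, and when $R_{j,k}$ is the nontrivial map both sides produce $x_i\otimes (x_j)_R\otimes (x_k)_R$.

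The only remaining case is $j=i+1$ and $k=i+2$, where both $R_{i,j}$ and $R_{j,k}$ are nontrivial. Here I would invoke translation invariance: the defining formulas for $R_{2n,2n+1}$ and $R_{2n-1,2n}$ do not depend on $n$ but only on the parity of the indices, so a consecutive triple $(i,i+1,i+2)$ with $i$ even is identical in form to $(0,1,2)$, and with $i$ odd is identical to $(1,2,3)$. In either case the compatibility is exactly what was established in Proposition 3.1, and the same computation applies verbatim.

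I do not anticipate a genuine obstacle. The entire content is the observation that $R_{i,k}$ is forced to be the flip, together with the period-two translation symmetry of the nontrivial maps, after which everything reduces either to Proposition 3.1 or to routine naturality of the flip. The only point demanding care is the bookkeeping in the mixed cases, namely tracking which tensor slot each $id$-factor and each flip acts on; but no computation beyond that of Proposition 3.1 is required.
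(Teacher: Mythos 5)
Your proof is correct and follows essentially the same route as the paper: a case analysis on whether $j-i$ and $k-j$ equal $1$, using that $R_{i,k}$ is always the flip (since $k-i\geq 2$), handling the mixed cases by direct computation with the flip, and reducing the doubly-consecutive case to Proposition 3.1 via the parity-periodicity of the twisting maps. The only cosmetic difference is that you state the forced-flip observation and the translation invariance explicitly, where the paper leaves them implicit.
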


\begin{proof}
Let us distinguish among several cases:

If $j-i\geq 2$ and $k-j\geq 2$, all three maps are just usual flips, and thus $R_{i,j}, R_{j,k}$ and $R_{i,k}$ are compatible.

If $j-i=1$ and $k-j\geq 2$, then we have that both $R_{i,k}$ and $R_{j,k}$ are usual flips. Hence, $R_{i,j}, R_{j,k}$ and $R_{i,k}$ are compatible. Indeed, for any $x\otimes y\otimes z\in A_k\bigotimes A_j\bigotimes A_i$, we have
\begin{eqnarray*}
    \begin{array}{rcl}
&&(id_{A_i}\otimes R_{j,k})\circ(R_{i,k}\otimes id_{A_j})\circ(id_{A_k}\otimes R_{i,j})(x\otimes y\otimes z)\\
&=&(id_{A_i}\otimes R_{j,k})\circ(R_{i,k}\otimes id_{A_j})(x\otimes z_{_{R_{i,j}}}\otimes y_{_{R_{i,j}}})\\
&=&(id_{A_i}\otimes R_{j,k})(z_{_{R_{i,j}}}\otimes x \otimes y_{_{R_{i,j}}})\\
&=&z_{_{R_{i,j}}}\otimes y_{_{R_{i,j}}} \otimes x,
\end{array}
\end{eqnarray*}
and
\begin{eqnarray*}
    \begin{array}{rcl}
&&(R_{i,j}\otimes id_{A_k})\circ(id_{A_j}\otimes R_{i,k})\circ(R_{j,k}\otimes id_{A_i})(x\otimes y\otimes z)\\
&=&(R_{i,j}\otimes id_{A_k})\circ(id_{A_j}\otimes R_{i,k})(y\otimes x\otimes z)\\
&=&(R_{i,j}\otimes id_{A_k})(y\otimes z\otimes x)\\
&=&z_{_{R_{i,j}}}\otimes y_{_{R_{i,j}}} \otimes x.
\end{array}
\end{eqnarray*}
Moreover, for any three twisting maps, if two of them are usual flips, then these three twisting maps are compatible. This statement implies $R_{i,j}, R_{j,k}$ and $R_{i,k}$ are compatible for $j-i\geq 2$ and $k-j=1$.

If $j-i=1$ and $k-j=1$, then $R_{i,j}, R_{j,k}$ and $R_{i,k}$ are compatible, the proof of which is the same as that of $R_{0,1}, R_{1,2}, R_{0,2}$ and $R_{1,2}, R_{2,3}, R_{1,3}$ are compatible.
\end{proof}

Assume that we have $n$ algebras $A_1, A_2, \cdots, A_n$ with a twisting map $R_{i,j}\colon A_j\bigotimes A_i\rightarrow A_i\bigotimes A_j$ for any $i<j$, and such that for every $i<j<k$ the maps $R_{i,j}, R_{j,k}$ and $R_{i,k}$ are compatible. Define now for any $i<n-1$ the map
\begin{eqnarray*}
    \begin{array}{c}
T_{n-1,n}^i\colon (A_{n-1}\bigotimes_{R_{n-1,n}} A_n)\bigotimes A_i\rightarrow A_i\bigotimes(A_{n-1}\bigotimes_{R_{n-1,n}} A_n)
\end{array}
\end{eqnarray*}
given by $T_{n-1,n}^i=( R_{i,n-1}\otimes A_{n})\circ(A_{n-1}\otimes R_{i,n})$, which are twisting maps for each $i\in\Bbb{Z}$, as the maps $R_{i,n-1}, R_{i,n}$ and $R_{n-1,n}$ are compatible (Lemma 3.1).
Moreover, for every $i<j<n-1$, the compatibility for $R_{i,j},R_{i,n-1},R_{j,n-1}$ and $R_{i,j},R_{i,n},R_{j,n}$ implies that for $R_{i,j}, T_{n-1,n}^i$ and $T_{n-1,n}^j$.
Hence, we can apply the induction hypothesis to the $n-1$ algebras $A_1, A_2, \cdots, A_{n-2}$ and $A_{n-1}\bigotimes_{R_{n-1,n}} A_n$, and can construct the twisted product of these $n-1$ factors, then we obtain the $C^*$-algebra
\begin{eqnarray*}
    \begin{array}{c}
A_1\bigotimes_{R_{1,2}}\cdots\bigotimes_{R_{n-3,n-2}}A_{n-2}
\bigotimes_{T_{n-1,n}^{n-2}}(A_{n-1}\bigotimes_{R_{n-1,n}} A_n).
\end{array}
\end{eqnarray*}
Notice that the maps $R_{n-2,n-1}, R_{n-1,n}$ and $R_{n-2,n-1}$ are compatible, which implies that \begin{eqnarray*}
    \begin{array}{c}
A_{n-2}\bigotimes_{T_{n-1,n}^{n-2}}(A_{n-1}\bigotimes_{R_{n-1,n}} A_n)=
(A_{n-2}\bigotimes_{R_{n-2,n-1}}A_{n-1})\bigotimes_{T_{n-2,n-1}^n} A_n.
\end{array}
\end{eqnarray*}
Now, we get the $C^*$-algebra
\begin{eqnarray*}
    \begin{array}{c}
A_1\bigotimes_{R_{1,2}}\cdots\bigotimes_{R_{n-3,n-2}}A_{n-2}
\bigotimes_{R_{n-2,n-1}}A_{n-1}\bigotimes_{R_{n-1,n}} A_n.
\end{array}
\end{eqnarray*}
In particular, for any $n,m\in \Bbb{Z}$ with $n<m$, we can define the $C^*$-algebras of finite dimension
\begin{eqnarray*}
    \begin{array}{c}
A_{n,m}\colon=A_n\bigotimes_{R_{n,n+1}}A_{n+1}\bigotimes_{R_{n+1,n+2}}\cdots A_{m-1}
\bigotimes_{R_{m-1,m}}A_m.
\end{array}
\end{eqnarray*}
If $n<n'$ and $m'<m$, then one can check that $A_{n',m'}\subseteq A_{n,m}$, with the map $i\colon A_{n',m'}\rightarrow A_{n,m}$ defined by
\begin{eqnarray*}
    \begin{array}{c}
i(x_{n'}\otimes x_{n'+1}\otimes\cdots\otimes x_{m'})
=1_{A_n}\otimes\cdots\otimes 1_{A_{n'-1}}\otimes x_{n'}\otimes x_{n'+1}\otimes\cdots\otimes x_{m'}\otimes 1_{A_{m'+1}}\otimes\cdots\otimes 1_{{A_m}}
\end{array}
\end{eqnarray*}
 is a $C^*$-algebra homomorphism preserving the norm, where we use $1_{A_i}$ for the identity in $A_i$ for $i\in\Bbb{Z}$.

 We write ${\mathcal{A}}$ for the $C^*$-inductive limit of $A_{n,m}$ with $n,m\in\Bbb{Z}$:
\begin{eqnarray*}
    \begin{array}{c}
{\mathcal{A}}\colon=\overline{\bigcup\limits_{n<m}A_{n,m}}.
\end{array}
\end{eqnarray*}
That is
\begin{eqnarray*}
    \begin{array}{c}
{\mathcal{A}}\colon=\cdots H\bigotimes_{R_{-2,-1}} {\widehat{G}}\bigotimes_{R_{-1,0}} H\bigotimes_{R_{0,1}}{\widehat{G}}\bigotimes_{R_{1,2}} H\bigotimes_{R_{2,3}}{\widehat{G}}\cdots,
\end{array}
\end{eqnarray*}
where the dots include a $C^*$-inductive limit procedure.

The following theorem is the main result of this paper, which gives a characterization of ${\mathcal{A}}_{(H,G)}$, the observable algebra related to a normal subgroup $H$ of $G$.

\begin{theorem}
The iterated crossed product ${\mathcal{A}}$ is $C^*$-isomorphic to ${\mathcal{A}}_{(H,G)}$.
\end{theorem}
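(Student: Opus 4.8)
The plan is to construct an explicit $*$-isomorphism $\Phi\colon \mathcal{A}\to\mathcal{A}_{(H,G)}$ by matching generators, and then to promote it to the $C^*$-inductive limits. I identify the factor index $i\in\Bbb{Z}$ with a site of the half-integer lattice $\tfrac12\Bbb{Z}$, sending an even index $2n$ (carrying $A_{2n}=H$) to the integer site $n$ and an odd index $2n+1$ (carrying $A_{2n+1}=\widehat{G}$) to the half-integer site $n+\tfrac12$. On generators I set $h\in A_{2n}\mapsto v_h(n)$ and $\delta_g\in A_{2n+1}\mapsto w_g(n+\tfrac12)$, and extend multiplicatively through the iterated twisted tensor product. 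By Theorem 2.1 the elements $v_h(x),w_g(l)$ generate $\mathcal{A}_{(H,G)}$, so as soon as $\Phi$ is shown to be a well-defined $*$-homomorphism it is automatically surjective onto $\mathcal{A}_{(H,G)}$.

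The heart of the argument is to check that $\Phi$ respects the multiplication, i.e. that the operator products of the $v$'s and $w$'s inside $\mathcal{F}_H$ reproduce exactly the twisted product on $A_{n,m}$. By the form of the twisting maps this reduces to a short list of local identities, all provable from the defining relations of $\mathcal{F}_{H,\mathrm{loc}}$ (the $\rho$-$\delta$ and $\rho$-$\rho$ commutation rules) together with normality of $H$: (i) at a single integer site, $v_{h_1}(n)v_{h_2}(n)=v_{h_1h_2}(n)$, recovering the group algebra $H$; (ii) at a single half-integer site, $w_{g_1}(l)w_{g_2}(l)=\delta_{g_1,g_2}w_{g_1}(l)$, recovering $\widehat{G}$; (iii) for neighbouring sites, the twists $w_g(n+\tfrac12)v_h(n)=v_h(n)w_{h^{-1}g}(n+\tfrac12)$ and $v_h(n)w_g(n-\tfrac12)=w_{gh^{-1}}(n-\tfrac12)v_h(n)$, which are precisely $R_{2n,2n+1}$ and $R_{2n-1,2n}$; and (iv) for sites at lattice distance $\geq 1$, commutation, matching the flip maps $R_{i,j}$ with $j-i\geq2$. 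A convenient consistency test for (i)--(iv) simultaneously is to verify that the product of two blocks $v_{h_1}(0)w_{g_1}(\tfrac12)v_{f_1}(1)$ and $v_{h_2}(0)w_{g_2}(\tfrac12)v_{f_2}(1)$ equals $v_{h_1h_2}(0)\,w_{h_2^{-1}g_1}(\tfrac12)w_{g_2f_1^{-1}}(\tfrac12)\,v_{f_1f_2}(1)$, matching the three-factor formula for $A_0\bigotimes_{R_{0,1}}A_1\bigotimes_{R_{1,2}}A_2$.

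Next I would verify that $\Phi$ intertwines the involutions. The $*$-operation on the block $H\bigotimes_{R_{0,1}}\widehat{G}\bigotimes_{R_{1,2}}H$ is $\theta(h_1\otimes\delta_g\otimes h_2)=h_1^{-1}\otimes\delta_{h_1gh_2}\otimes h_2^{-1}$, so the identity to be confirmed in $\mathcal{F}_H$ is $\big(v_{h_1}(0)w_g(\tfrac12)v_{h_2}(1)\big)^*=v_{h_1^{-1}}(0)\,w_{h_1gh_2}(\tfrac12)\,v_{h_2^{-1}}(1)$, with the analogous statement for the $\widehat{G}$-$H$-$\widehat{G}$ block; both follow from $\delta_g^*(x)=\delta_g(x)$, $\rho_h^*(l)=\rho_{h^{-1}}(l)$ and the commutation relations. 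This shows $\Phi$ is a $*$-homomorphism on the dense $*$-subalgebra $\bigcup_{n<m}A_{n,m}$.

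It remains to prove injectivity and to pass to the limit. Since an injective $*$-homomorphism of $C^*$-algebras is isometric, it suffices to show $\Phi$ is faithful on each finite-dimensional $A_{n,m}$. I would do this by exhibiting a representation of $\mathcal{F}_H$ whose restriction to the subalgebra generated by the relevant $v_h(x),w_g(l)$ is unitarily equivalent to the faithful representation $\pi_{n,m}$ of Proposition 3.2, so that faithfulness of $\pi_{n,m}$ forces $\ker(\Phi|_{A_{n,m}})=0$. Being isometric on each $A_{n,m}$ and compatible with the connecting inclusions $A_{n',m'}\subseteq A_{n,m}$, $\Phi$ extends continuously to an isometric $*$-isomorphism of the inductive limit $\mathcal{A}$ onto $\overline{\bigcup_{n<m}\Phi(A_{n,m})}=\mathcal{A}_{(H,G)}$. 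I expect the main obstacle to be the computational core, step (iv): proving that observables at lattice distance $\geq 1$ commute, in particular that $v_h(n)$ and $v_{h'}(n+1)$ commute even though they share the half-integer link $n+\tfrac12$. This requires carefully pushing the two $\rho$-factors sitting at that link past each other via the non-Abelian $\rho$-$\rho$ rule and checking that the induced conjugations cancel after summation over the $\delta$-labels; it is the step where normality of $H$ is used most delicately.
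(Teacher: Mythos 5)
Your proposal follows essentially the same route as the paper: both define the isomorphism block-by-block on generators via $h\otimes\delta_g\otimes f\mapsto v_h(0)w_g(\tfrac12)v_f(1)$, verify multiplicativity and the $*$-structure through the same local commutation relations of the $v,w$ fields against the twisting maps $R_{i,j}$, establish isometry on the finite-dimensional pieces $A_{n,m}$, and pass to the $C^*$-inductive limit using compatibility with the inclusions. The only divergence is a matter of detail rather than strategy --- you argue injectivity by comparing with the faithful representations $\pi_{n,m}$ of Proposition 3.2, whereas the paper asserts bijectivity of each $\Phi_{-n,m}$ directly and invokes the open mapping theorem; your version is if anything more explicit on this point.
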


\begin{proof}
For a finite interval $\Lambda$, let
\begin{eqnarray*}
    \begin{array}{c}
    {\mathcal{A}_H}(\Lambda)=\left\langle v_h(x), w_g(l): h\in H, g\in G, x\in \Lambda\cap\Bbb{Z}, l\in \Lambda\cap(\Bbb{Z}+\frac{1}{2})\right\rangle.
     \end{array}
\end{eqnarray*}
It follows from Theorem 2.1 that
\begin{eqnarray*}
    \begin{array}{c}
    {\mathcal{A}}_{(H,G)}=\overline{\bigcup\limits_\Lambda{\mathcal{A}_H}(\Lambda)},
 \end{array}
\end{eqnarray*}
where the union is taken over finite intervals $\Lambda$ and the bar denotes uniform closure.

Consider the map
\begin{eqnarray*}
    \begin{array}{c}
    \Phi_{0,1}\colon A_{0,2}=H\bigotimes_{R_{0,1}}{\widehat{G}}\bigotimes_{R_{1,2}} H \longrightarrow {\mathcal{A}_H}(\Lambda_{0,1})=\langle v_h(0),w_g(\frac{1}{2}), v_f(1)\colon h,f\in H, g\in G \rangle
      \end{array}
\end{eqnarray*}
defined on a generator $h\otimes\delta_g\otimes f$ of $H\bigotimes_{R_{0,1}}{\widehat{G}}\bigotimes_{R_{1,2}} H$ by
\begin{eqnarray*}
    \begin{array}{c}
    \Phi_{0,1}(h\otimes\delta_g\otimes f)=v_h(0)w_g(\frac{1}{2})v_f(1)
      \end{array}
\end{eqnarray*}
Observe that
\begin{eqnarray*}
    \begin{array}{rcl}
v_{h_1}(0)w_{g_1}(\frac{1}{2})v_{f_1}(1)v_{h_2}(0)w_{g_2}(\frac{1}{2})v_{f_2}(1)
&=&v_{h_1}(0)w_{g_1}(\frac{1}{2})v_{h_2}(0)v_{f_1}(1)w_{g_2}(\frac{1}{2})v_{f_2}(1)\\
&=&v_{h_1}(0)v_{h_2}(0)w_{h_2^{-1}g_1}(\frac{1}{2})v_{f_1}(1)w_{g_2}(\frac{1}{2})v_{f_2}(1)\\
&=&v_{h_1h_2}(0)w_{h_2^{-1}g_1}(\frac{1}{2})w_{g_2f_1^{-1}}(\frac{1}{2})v_{f_1}(1)v_{f_2}(1)\\
&=&v_{h_1h_2}(0)w_{h_2^{-1}g_1}(\frac{1}{2})w_{g_2f_1^{-1}}(\frac{1}{2})v_{f_1f_2}(1),
      \end{array}
\end{eqnarray*}
where we use the commutation relations of the $v,w$ generators
\begin{eqnarray*}
    \begin{array}{rcl}
    v_{h_1}(x)v_{h_1}(x)&=&v_{h_1h_2}(x),\\
      w_{g_1}(l)w_{g_2}(l)&=&\delta_{g_1,g_2}w_{g_1}(l),\\
v_h(x)w_g(x+\frac{1}{2})&=&w_{hg}(x+\frac{1}{2})v_h(x),\\
v_h(x)w_g(x-\frac{1}{2})&=&w_{gh^{-1}}(x-\frac{1}{2})v_h(x),
      \end{array}
\end{eqnarray*}
other pairs of $v$ and$/$or $w$ fields commute.

On the other hand,
\begin{eqnarray*}
    \begin{array}{c}
    (h_1\otimes\delta_{g_1}\otimes f_1)(h_2\otimes\delta_{g_2}\otimes f_2)
=h_1h_2\otimes\delta_{h_2^{-1}g_1}\delta_{g_2f_1^{-1}}\otimes f_1f_2.
      \end{array}
\end{eqnarray*}
Thus, $\Phi_{0,1}$ is an algebra homomorphism. And we have that
\begin{eqnarray*}
    \begin{array}{rcl}
   (\Phi_{0,1}(h\otimes\delta_g\otimes f))^*&=&(v_h(0)w_g(\frac{1}{2})v_f(1))^*\\
   &=&v_{f^{-1}}(1)w_g(\frac{1}{2})v_{h^{-1}}(0)\\
&=&v_{h^{-1}}(0)v_{f^{-1}}(1)w_{hg}(\frac{1}{2})\\
&=&v_{h^{-1}}(0)w_{hgf}(\frac{1}{2})v_{f^{-1}}(1)\\
&=&\Phi_{0,1}(h^{-1}\otimes\delta_{hgf}\otimes f^{-1})\\
&=&\Phi_{0,1}((h\otimes\delta_g\otimes f)^*),
      \end{array}
\end{eqnarray*}
where we use the properties: $w_g(l)$ is a self-adjoint projection in ${\mathcal{A}_H}(\Lambda)$, and $v_h(x)$ is a unitary element in ${\mathcal{A}_H}(\Lambda)$ for any $l\in \Lambda\cap(\Bbb{Z}+\frac{1}{2}), x\in \Lambda\cap\Bbb{Z}$. Hence, $\Phi_{0,1}$ is a $C^*$-homomorphism. By Theorem 2.1.7 in \cite{GJM}, we know $\Phi_{0,1}$ is norm-decreasing.

Also, $\Phi_{0,1}$ is bijective, which together with the open mapping theorem yields that $\Phi_{0,1}$ is a $C^*$-isomorphism
between $C^*$-algebras $A_{0,2}$ and ${\mathcal{A}_H}(\Lambda_{0,1})$.

By induction, we can build a $C^*$-isomorphism $\Phi_{-n,m}$ between $A_{-2n,2m}$ and ${\mathcal{A}_H}(\Lambda_{-n,m})$, for any $n,m\in\Bbb{Z}$.

Now, because of the last relation we can define a $C^*$-isomorphism
\begin{eqnarray*}
    \begin{array}{c}
\Phi\colon \bigcup\limits_{n<m}A_{-2n,2m}\rightarrow
\bigcup\limits_{n<m}{\mathcal{A}_H}(\Lambda_{-n,m})
       \end{array}
\end{eqnarray*}
by $\Phi|_{{\mathcal{A}}_{-2n,2m}}=\Phi_{-n,m}$.
Since each $\Phi_{-n,m}$ is an isometry, then $\Phi$ is an isometry, and $\Phi$
can therefore be extended to the map of $\overline{\bigcup\limits_{n<m}A_{-2n,2m}}$ onto $\overline{\bigcup\limits_{n<m}{\mathcal{A}_H}(\Lambda_{-n,m})}$ by continuity.
Since all the operations in the definition of a $C^*$-algebra are norm continuous,
this extended map is an isomorphism.
Hence,
$\overline{\bigcup\limits_{n<m}A_{-2n,2m}}$ is $C^*$-isomorphic to $\overline{\bigcup\limits_{n<m}{\mathcal{A}_H}(\Lambda_{-n,m})}$.

Finally, the uniqueness of the $C^*$-inductive limit (\cite{LBR}) implies that ${\mathcal{A}}=\overline{\bigcup\limits_{n<m}A_{-2n,2m}}$
and ${\mathcal{A}}_{(H,G)}=\overline{\bigcup\limits_{n<m}{\mathcal{A}_H}(\Lambda_{-n,m})}$.
As a result, ${\mathcal{A}}$ is  $C^*$-isomorphic to ${\mathcal{A}}_{(H,G)}$.
\end{proof}

\begin{remark}
From Theorem 3.1, one can see that for a normal subgroup $H$ of $G$ the observable algebra related to $H$ in the field algebra ${\mathcal{F}}$ of $G$-spin models ${\mathcal{A}}_{(H,G)}$ can be defined as
\begin{eqnarray*}
    \begin{array}{c}
{\mathcal{A}}_{(H,G)}\ =\ \cdots H\bigotimes_{R_{-2,-1}} {\widehat{G}}\bigotimes_{R_{-1,0}} H\bigotimes_{R_{0,1}}{\widehat{G}}\bigotimes_{R_{1,2}} H\bigotimes_{R_{2,3}}{\widehat{G}}\cdots.
\end{array}
\end{eqnarray*}
In particular, take $G$ as a normal subgroup of $G$, then we have the observable algebra ${\mathcal{A}}_G$ (\cite{F.Nil}) can also be expressed as
\begin{eqnarray*}
    \begin{array}{c}
{\mathcal{A}}_G\ =\ \cdots G\bigotimes_{R_{-2,-1}} {\widehat{G}}\bigotimes_{R_{-1,0}} G\bigotimes_{R_{0,1}}{\widehat{G}}\bigotimes_{R_{1,2}} G\bigotimes_{R_{2,3}}{\widehat{G}}\cdots.
\end{array}
\end{eqnarray*}
What is more, we can get ${\mathcal{A}}_{(H,G)}\subseteq {\mathcal{A}}_G$, from the above expressions of ${\mathcal{A}}_{(H,G)}$ and ${\mathcal{A}}_G$, which is different from ${\mathcal{A}}_G\subseteq {\mathcal{A}}_{(G,H)}$ (\cite{JLN}).
\end{remark}

\begin{remark}
Notice that the linear map
$\varphi\colon G\bigotimes \widehat{H}\rightarrow\widehat{H}$ defined naturally by
\begin{eqnarray*}
    \begin{array}{c}
    \varphi_g(\delta_h)=g\rightarrow\delta_h=\sum\limits_{t\in H}\delta_t\delta_{t^{-1}h}(g)=\left\{\begin{array}{lcl}
                          \delta_{hg^{-1}}, & {\mathrm{if}} &  g\in H \\[3pt]
                          0, & {\mathrm{if}} & g\in G/H
                        \end{array}\right.
\end{array}
\end{eqnarray*}
is not a left action of $G$ on $\widehat{H}$. Thus, ${\widehat{H}}\bigotimes_{R_{1,2}}G$ can not be defined for any subgroup $H$ of $G$, and then
\begin{eqnarray*}
    \begin{array}{c}
\cdots G\bigotimes_{R_{-2,-1}} {\widehat{H}}\bigotimes_{R_{-1,0}} G\bigotimes_{R_{0,1}}{\widehat{H}}\bigotimes_{R_{1,2}} G\bigotimes_{R_{2,3}}{\widehat{H}}\cdots
\end{array}
\end{eqnarray*}
can not be defined naturally. However, the observable algebra ${\mathcal{A}}_{(G,H)}$ in the field algebra ${\mathcal{F}}$ is well defined \cite{JLN}, which can be obtained as the fixed point algebra
\begin{eqnarray*}
    \begin{array}{c}
    {\mathcal{A}}_{(G,H)}\ =\ {\mathcal{F}}^{D(G;H)}\ \equiv\  \{F\in {\mathcal{F}}\colon a(F)=\varepsilon(a)F, \ \forall a\in D(G;H)\}.
    \end{array}
\end{eqnarray*}
Here $D(G;H)$ denotes the crossed product of $C(G)$ and $\Bbb{C}G$ with respect to the adjoint to the action of the latter on the former, $\varepsilon$ is the counit of $D(G;H)$, and ${\mathcal{F}}$ is the field algebra of $G$-spin models.
\end{remark}
{\bf Acknowledgements}\\
This research was supported by National Science Foundation of China (10971011,11371222).


\begin{thebibliography}{00}

\bibitem{E.Abe} E.Abe, Hopf Algebras, Cambridge Tracts in Mathematics, No. 74, Cambridge Cambridge Univ. Press, New York, 1980.
\bibitem{P.Ban} P.B\'{a}ntay, Orbifolds and Hopf algebras, Phys. Lett. B. \textbf{245}, 477-479 (1990).
\bibitem{A.Cap} A.Cap, H.Schichl, J.Vanzura, On twisted tensor products of algebras, Comm. Algebra \textbf{23}, 4701-4735 (1995).
\bibitem{K.A.Da} K.A.Dancer, P.S.Isaac, J.Links, Representations of the quantum doubles of finite group algebras and spectral parameter dependent solutions of the Yang-Baxter equations, J. Math. Phys. \textbf{47}, 103511 (2006).
\bibitem{S.Dop} S.Doplicher, J.Roberts, Fields, statistics and non-abelian gauge group, Comm. Math. Phys. \textbf{28}, 331-348 (1972).
\bibitem{R.G.He} R.G.Heynernan and M.E.Sweedler, Affine Hopf Algebras, I, J. Algebra \textbf{13}, 192-241 (1969).
\bibitem{P.Jar} P.Jara Mart\'{\i}nez, J.L$\mathrm{\acute{o}}$pez Pe$\mathrm{\tilde{n}}$a, F.Panaite, F.Van Oystaeyen, On iterated twisted tensor products of algebras, Internat. J. Math. \textbf{19}, 1053-1101 (2008).
\bibitem{JLN} L.N.Jiang, Towards a quantum Galois theory for quantum double algebras of finite groups, Proc. Amer. Math. Soc. \textbf{138}, 2793-2801 (2010).
\bibitem{V.F.R.J} V.F.R.Jones, Subfactors and Knots, CBMS, No. 80, American Mathematical Society Providence, Rhode Island, 1991.
\bibitem{C.Kas} C.Kassel, Quantum groups, Springer, New York, 1995, GTM 155.
\bibitem{LBR} B.R.Lin, Operate algbera, Science Press, Beijing, 1992 (in Chinese).
\bibitem{G.Mas} G.Mason,  The quantum double of a finite group and its role in conformal field theory, London Mathematical Society Lecture Notes, 212, 405-417, Cambridge Univ. Press, Cambridge, 1995.
\bibitem{GJM} G.J.Murphy, $C^*$-algebras and operator theory, Academic Press Inc. 1990
\bibitem{F.Nil} F.Nill and K.Szlach$\mathrm{\acute{a}}$nyi, Quantum chains of Hopf algebras with quantum double cosymmetry, Comm. Math. Phys. \textbf{187}, 159-200 (1997).
\bibitem{M.E.Sw} M.E.Sweedler, Hopf algebras, W.A. Benjamin, New York, 1969.
\bibitem{K.Szl} K.Szlach$\mathrm{\acute{a}}$nyi and P.Vecsernyes, Quantum symmetry and braided group statistics in $G$-spin models, Comm. Math. Phys. \textbf{156}, 127-168 (1993).
\bibitem{Qiao} Q.L.Xin, L.N.Jiang, Symmetric structure of field algebra of G-spin models determined by a normal subgroup, J. Math. Phys. \textbf{55}, 091703 (2014).


\end{thebibliography}
\end{document}